\newcommand{\nwc}{\newcommand}
\nwc{\aaa}{\mathcal{F}}
\nwc{\aap}{\mathcal{F}_{P}}
\nwc{\al}{\alpha}
\nwc{\C}{\mathbb{C}}
\nwc{\cb}{\overline{C}}
\nwc{\ccc}{\mathfrak{c}}
\nwc{\ch}{\widehat{C}}
\nwc{\cin}{\textbf{(v)}}
\nwc{\cl}{C'}
\nwc{\cp}{\mathcal{C}_{P}}
\nwc{\cpll}{\mathfrak{c}_{P'}}
\nwc{\ct}{\widetilde{C}}
\nwc{\dd}{\mathcal{L}}
\nwc{\ddd}{\mathfrak{d}}
\nwc{\ddl}{\mathcal{L}'}
\nwc{\dlp}{\delta_{P}}
\nwc{\doi}{\textbf{(ii)}}
\nwc{\enq}{$$}
\nwc{\fl}{\flushleft}
\nwc{\fff}{\mathcal{F}}
\nwc{\ffp}{\mathcal{F}_{P}}
\nwc{\ffq}{\mathcal{F}_{Q}}
\nwc{\ffl}{\mathcal{F}'}
\nwc{\G}{\mathcal{G}}
\nwc{\Ga}{\Gamma}
\nwc{\gtl}{\widetilde{g}}
\nwc{\hra}{\hookrightarrow}
\nwc{\hua}{h^{1}(C,\aaa )}
\nwc{\kk}{{\rm K}}
\nwc{\llb}{\mathcal{L}}
\nwc{\mb}{\mathbb}
\nwc{\mc}{\mathcal}
\nwc{\mm}{\mathfrak{m}}
\nwc{\mmp}{\mathfrak{m}_{P}}
\nwc{\mpd}{\mathfrak{m}_{P}^{2}}
\nwc{\nn}{\mathbb{N}}
\nwc{\ob}{\overline{\mathcal{O}}}
\nwc{\obr}{\mathcal{O}^*}
\nwc{\obp}{\overline{\mathcal{O}}_P}
\nwc{\och}{\mathcal{O}_{\hat{C}}}
\nwc{\oh}{\hat{\mathcal{O}}}
\nwc{\ohp}{\hat{\mathcal{O}}_{P}}
\nwc{\ol}{\mathcal{O}'}
\nwc{\oma}{\Omega (\mathfrak{a})}
\nwc{\omo}{\Omega (\mathcal{O})}
\nwc{\oo}{\mathcal{O}}
\nwc{\op}{\mathcal{O}_P}
\nwc{\opc}{\mathcal{O}_{P,C}}
\nwc{\oph}{\hat{\mathcal{O}}_{P}}
\nwc{\opl}{\mathcal{O}_{P}'}
\nwc{\oplc}{\mathcal{O}_{P,C}'}
\nwc{\opll}{\mathcal{O}_{P'}}
\nwc{\opt}{\tilde{\mathcal{O}}_{P}}
\nwc{\optt}{{\mathcal{O}}_{\tilde{P}}}
\nwc{\oq}{\mathcal{O}_{Q}}
\nwc{\oqt}{\tilde{\mathcal{O}}_{Q}}
\nwc{\ot}{\widetilde{\mathcal{O}}}
\nwc{\overop}{\bar{\oo}_{P}}
\nwc{\pb}{\overline{P}}
\nwc{\pbb}{P^*}
\nwc{\pbi}{\overline{P_{i}}}
\nwc{\pbr}{\overline{P_{r}}}
\nwc{\pgmd}{\mathbb{P}^{g+2}}
\nwc{\pgmu}{\mathbb{P}^{g+1}}
\nwc{\ph}{\hat{P}}
\nwc{\pp}{\mathbb{P}}
\nwc{\prv}{\noindent\textbook{Proof}:}
\nwc{\pt}{\widetilde{P}}
\nwc{\ptl}{\tilde{P}}
\nwc{\pum}{\mathbb{P}^{1}}
\nwc{\qh}{\hat{Q}}
\nwc{\qtl}{\tilde{Q}}
\nwc{\qua}{\textbf{(iv)}}
\nwc{\ra}{\rightarrow}
\nwc{\rh}{\hat{R}}
\nwc{\sei}{\textbf{(vi)}}
\nwc{\sep}{\beq\ast\ \ast\ \ast\enq}
\nwc{\sig}{\sigma}
\nwc{\Sig}{\Sigma}
\nwc{\ssp}{S_{P}}
\nwc{\sss}{{\rm S}}
\nwc{\tre}{\textbf{(iii)}}
\nwc{\um}{\textbf{(i)}}
\nwc{\vpb}{v_{\overline{P}}}
\nwc{\vtxp}{\widetilde{V}_{x,P}}
\nwc{\vxp}{V_{x,P}}
\let \mc=\mathcal
\nwc{\wh}{\hat{\omega}}
\nwc{\whp}{\hat{\omega}_{P}}
\nwc{\woch}{\omega\cdot\mathcal{O}_{\hat{C}}}
\nwc{\woh}{\omega\cdot\hat{\mathcal{O}}}
\nwc{\ww}{\omega}
\nwc{\wwb}{\omega^*}
\nwc{\wwct}{\omega _{\widetilde{C}}}
\nwc{\wwh}{\widehat{\omega}}
\nwc{\wwhp}{\widehat{\omega}_P}
\nwc{\wwp}{\omega _{P}}
\nwc{\wwt}{\widetilde{\omega}}
\nwc{\wwtp}{\widetilde{\omega}_P}
\nwc{\zz}{\mathbb{Z}}
\newtheorem{coro}{Corollary}[section]
\newtheorem{lemma}[coro]{Lemma}
\newtheorem{conj}[coro]{Conjecture}
\let \fl=\flushleft
\let \ga=\gamma
\let \al=\alpha
\let \la=\lambda
\title{Inflection divisors of linear series on an elliptic curve}
\author{Ethan Cotterill}
\address{Instituto de Matem\'atica, UFF, Rua Prof Waldemar de Freitas, S/N,
24.210-201 Niter\'oi RJ, Brazil}
\email{cotterill.ethan@gmail.com}
\author{Cristhian Garay L\'opez}
\address{Departamento de Matem\'aticas, Centro de Investigaci\'on y de Estudios Avanzados del IPN, Apartado Postal 14-740, 07000. Ciudad de M\'exico, M\'exico.}
\email{cgaray@math.cinvestav.mx}
\begin{document}
\maketitle{}

\begin{abstract}
In this largely-expository note, we describe a class of divisors on elliptic curves that index the inflection points of linear series arising (as subspaces of holomorphic sections) from line bundles on $\mb{P}^1$ via pullback along the canonical 2-to-1 projection. Associated to each inflection divisor on an elliptic curve $E_{\la}: y^2= x(x-1)(x-\la)$, there is an associated {\it inflectionary curve} in (the projective compactification of) the affine plane in coordinates $x$ and $\la$. These inflectionary curves have remarkable features; among other things, they lead directly to an explicit conjecture for the number of {\it real} inflection points of linear series on $E_{\la}$ whenever the Legendre parameter $\la$ is real.
\end{abstract}

\section{Introduction}

\subsection{Motivation} The {\it torsion points} of an elliptic curve $E$ defined over a field $K$ are classical objects of study, and they are parametrized by the {\it division polynomials} defined in \cite{S}. Given integers $k>g>0$, set $\mu:=k-g$. Below we introduce an effective divisor $I(\mu,k)$ on $E$ of degree $4\mu(k+1)$ that generalizes the divisor $E[2k]$ of order-$2k$ torsion points, in that
\[
I(k-1,k)=\sum_{p\in E[2k]}p-R_\pi
\]
where $R_\pi$ is the ramification divisor of the double cover $\pi:E\longrightarrow\mathbb{P}^1$. This {\it inflection divisor} is the zero locus of a Wronskian of partial derivatives of a space of regular sections of a line bundle $L$. When $\mu=k-1$ this {\it linear series} is complete, and it is well-known that inflection points of a complete series on an elliptic curve are in bijection with torsion points of order $\deg(L)$; see, e.g., \cite{BCG}.

\subsection{Formal definition} Suppose that $E=(E,O)$ is given by the affine equation $y^2=f(x)$  and  $O=\infty$, so that $R_\pi=E[2]$. Over the open set $E_y=E\setminus R_\pi$, the Wronskian whose zeroes describe the inflection locus of the linear series with basis
\[
\mc{F}=\{1,x,\dots,x^k, y, yx, \dots, yx^{\mu-1}\}
\]
is precisely the determinant of the matrix
\[
M(\mu,k):=(D^j(x^iy))_{\substack{0\leq i\leq \mu-1\\k+1\leq j\leq k+\mu\\}}
\]
where $D=\frac{d}{dx}$. 
Accordingly, we set 
\[
I(\mu,k):=\text{div}(\det M(\mu,k)).
\]
\section{Properties of inflection divisors}
\subsection{Basic structure} Each divisor $I(\mu,k)$ is determined by an {\it inflection polynomial} $P_{\mu,k}(x)\in K[x]$ defined by

    \[
    \det M(\mu,k)=(f^{-(k+1)}y)^{\mu}P_{\mu,k}(x).
\]


 
 


 
 
 \medskip
The zeroes of $P_{\mu,k}$ give the {\it $x$-coordinates} of points belonging to the inflection divisor $I(\mu,k)$. Clearly $I(\mu,k)$ is invariant under the involution $(x,y)\mapsto (x,-y)$ on $E_y$, and it follows that $\deg_x(P_{\mu,k})=2\mu(k+1)$. More to the point, if we write $f=x(x-1)(x-\la)$ in Legendre form with $\la \in \mb{C}$, we may view $P_{\mu,k}$ as a function of both $x$ and $\la$, and $\deg_\lambda(P_{\mu,k})=\mu(k+1)$.
 
 
 \medskip
 The case $\mu=1$ is distinguished: in that case the matrix $M=M(\mu,k)$ is a $1 \times 1$ matrix, and the corresponding family of inflection polynomials $P_{1,k}$ is described inductively by the rule
 \begin{equation}\label{inflection_recurrence}
 P_{1,k+1}=D(P_{1,k})f+ (-k+1/2)P_{1,k}D(f)
 \end{equation}
 for all $k \geq 0$, subject to the seed datum $P_{1,0}=\frac{1}{2} D(f)$.
 

\medskip
Inflection polynomials associated with higher values of $\mu$ may be realized as polynomials in the ``basic" inflection polynomials $P_{1,k}$, as follows.

 \begin{lemma}\label{general_inflection_polys} Given positive integers $\mu\geq2$ and $k \geq 3$, set $n=k+1$. There exists a homogeneous polynomial $Q_{\mu,n}\in\mathbb{Z}[t_{1-\mu},\ldots t_0,\ldots,t_{\mu-1}]$ of degree $\mu$ for which
 \begin{equation}
     P_{\mu,k}=Q_{\mu,n}|_{t_\ell=P_{1,n+\ell-1}}
 \end{equation}
where \[
     Q_{\mu,n}(t_{1-\mu},\ldots t_0,\ldots,t_{\mu-1}):=\det\left((n+j)_{(i)} t_{j-i}\right)_{0\leq i,j\leq \mu-1}
 \]
 and where, for any non-negative integers $a$ and $i$, $a_{(i)}= \frac{a!}{(a-i)!}$ denotes the $i$-th falling factorial of $a$.
 \end{lemma}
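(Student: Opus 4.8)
The plan is to compute $\det M(\mu,k)$ directly, extracting a common factor from each row, and recognizing that what remains is a matrix whose entries are the basic inflection polynomials $P_{1,n+\ell-1}$ with falling-factorial coefficients. The key observation is that $D^j(x^i y)$ can be understood by expanding $x^i y$ in terms of the functions $x^m y$ for which we already know the Wronskian behavior: since $P_{1,k}$ is defined by $\det M(1,k) = (f^{-(k+1)}y)P_{1,k}$, that is $D^{k+1}(y) = f^{-(k+1)} y\, P_{1,k}$, we should aim to express $D^{k+\mu-i}(x^i y)$ as a combination of the $D^{n+\ell-1}(y)$'s, or more precisely to reduce each entry of $M(\mu,k)$ to the form (falling factorial coefficient) $\times$ (a basic inflection polynomial) $\times$ (a common function).

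First I would establish the elementary identity expressing $D^j(x^i y)$ in terms of derivatives of $y$ alone. Writing $D^j(x^i y) = \sum_{s} \binom{j}{s} D^s(x^i) D^{j-s}(y) = \sum_{s} \binom{j}{s} i_{(s)} x^{i-s} D^{j-s}(y)$ by the Leibniz rule, and then substituting $D^{j-s}(y) = f^{-(j-s)} y\, P_{1,j-s-1}$ once $j - s \geq 1$. This is messier than what the lemma asserts, so the real content is to observe a simplification: the rows of $M(\mu,k)$ are indexed by $j$ running over a window of length $\mu$, and the columns by $i$; performing column operations (which don't change the determinant) to pass from the basis $\{x^i y\}$ to a basis adapted to the filtration by order of pole of $f$, or equivalently performing row operations to isolate the highest-order derivative term, should collapse the double sum. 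I expect that after the right unimodular change of basis the $(i,j)$ entry becomes exactly $(n+j)_{(i)} f^{-(n+j)} y\, P_{1, n+j-i-1}$ up to relabeling, at which point factoring $f^{-(n+j)}y$ out of row $j$ and using $\det M(\mu,k) = (f^{-(k+1)}y)^\mu P_{\mu,k}$ yields the claimed formula, with the residual determinant being $Q_{\mu,n}|_{t_\ell = P_{1,n+\ell-1}}$.

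The homogeneity and integrality of $Q_{\mu,n}$ are then automatic: it is a determinant of a $\mu\times\mu$ matrix each of whose entries is an integer multiple of a single variable $t_{j-i}$, so the determinant is a homogeneous polynomial of degree $\mu$ in $\mathbb{Z}[t_{1-\mu},\ldots,t_{\mu-1}]$. I would also need to check the degree bookkeeping: that the powers of $f$ and $y$ factored out of the $\mu$ rows total exactly $(f^{-(k+1)}y)^\mu$ after accounting for the extra $f$-powers that the column operations introduce — this is a consistency check that the definition of $P_{\mu,k}$ via $\det M(\mu,k) = (f^{-(k+1)}y)^\mu P_{\mu,k}$ is compatible with the factorization, and it should follow from matching $\deg_x$ on both sides using the already-established value $\deg_x(P_{\mu,k}) = 2\mu(k+1)$.

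The main obstacle I anticipate is controlling the column (or row) operations cleanly: the naive Leibniz expansion produces, in entry $(i,j)$, a sum over $s$ of terms involving $P_{1,j-s-1}$ with various $x^{i-s}$ and $f^{-(j-s)}$ factors, and it is not a priori obvious that a single unimodular transformation simultaneously diagonalizes this across all rows and columns into the advertised one-term-per-entry shape. Making this work will likely require exploiting that the window of $j$-values has length exactly $\mu$ (matching the number of columns) together with the hypothesis $k \geq 3$, which guarantees $n + j - i - 1 = k + j - i \geq k + 1 - (\mu - 1) = k - \mu + 2 \geq 2$ ... more carefully, that all the indices $n+\ell-1$ appearing stay in the valid range $\ell \geq 1 - \mu$ so that only the ``genuine'' basic inflection polynomials $P_{1,m}$ with $m \geq 0$ occur and no boundary corrections to the recurrence \eqref{inflection_recurrence} are needed. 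Once the structural identity for the entries is pinned down, assembling the determinant and reading off $Q_{\mu,n}$ is routine linear algebra.
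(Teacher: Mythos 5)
Your proposal is correct in substance and follows essentially the same route as the paper: reduce $\det M(\mu,k)$ to a determinant whose $(i,j)$ entry is a falling factorial times a pure derivative of $y$, then substitute $D^m y = f^{-m}y\,P_{1,m-1}$ and extract $(f^{-(k+1)}y)^\mu$. The one difference is that the paper simply imports the reduction step as identity \eqref{equivalence} from \cite{BCG}, whereas you propose to derive it --- and the obstacle you anticipate there is not actually an obstacle. By Leibniz, the column of $M(\mu,k)$ indexed by $i$ equals $\sum_{s=0}^{i}\binom{i}{s}x^{i-s}v_s$, where $v_s$ is the column whose $j$-th entry is $j_{(s)}D^{j-s}y$; since the coefficient of $v_i$ is $1$, this is a unipotent triangular column transformation, so $\det M(\mu,k)=\det(v_0|\cdots|v_{\mu-1})=\det\bigl((n+j)_{(i)}D^{n+j-i}y\bigr)_{0\le i,j\le\mu-1}$ with no further work. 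Two small corrections to your bookkeeping: the reduced $(i,j)$ entry is $(n+j)_{(i)}f^{-(n+j-i)}y\,P_{1,n+j-i-1}$, not $f^{-(n+j)}(\cdots)$, so you cannot factor a single power of $f$ out of each row separately; instead observe that in every monomial of $\det\bigl((n+j)_{(i)}t_{j-i}\bigr)$ the subscripts $j-\sigma(j)$ sum to zero over the permutation $\sigma$, whence substituting $t_\ell\mapsto f^{-(n+\ell)}y\,P_{1,n+\ell-1}$ pulls out exactly $(f^{-n}y)^\mu=(f^{-(k+1)}y)^\mu$. This homogeneity observation is precisely how the paper concludes, and it also disposes of your separate ``degree bookkeeping'' consistency check.
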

 
 \begin{proof}
 We make use of the following expression from \cite{BCG}: \begin{equation}\label{equivalence}
\text{det }(M(\mu,k))\,=\,\text{det}\biggl(\frac{(k+1+j)!}{(k+1+j-i)!}D^{k+1+j-i}y\biggr)_{ 0\leq i,j\leq \mu-1}.
\end{equation}
 
Substituting $t_{j-i}=D^{k+1+j-i}y$ and $n=k+1$ in \eqref{equivalence} yields
\[
\text{det }(M(\mu,k))\,=\,\text{det}\biggl(\frac{(n+j)!}{(n+j-i)!}t_{j-i}\biggr)_{ 0\leq i,j\leq \mu-1}=Q_{\mu,n}|_{t_\ell=D^{n+\ell}y},
\]
 
The desired result now follows from the fact that $D^{n+\ell}y=f^{n+\ell}yP_{1,n+\ell-1}$, since $Q_{\mu,n}|_{t_\ell=D^{n+\ell}y}=(f^{-n}y)^\mu Q_{\mu,n}|_{t_\ell=P_{1,n+\ell-1}}$.
\end{proof}

 The upshot of Lemma~\ref{general_inflection_polys} is that the projective geometry of the inflection divisor $I(\mu,k)$ is controlled by the inductive prescription \eqref{inflection_recurrence} for basic inflection polynomials, and also (in a more obscure way) by the hypersurfaces of degree $\mu$ defined by the $Q_{\mu,n}$ in $\mb{P}^{2(\mu-1)}$. Note that the division polynomials are essentially the inflection polynomials $P_{k-1,k}$.
 
 {\bf \fl Remarks.}
\begin{itemize}
\item[i.] 
When $m\geq3$, a basis for the complete linear series on $E$ induced by the divisor $m\cdot\infty$ is $\mathcal{F}=\{x^i,x^jy\}_{i,j}$, where 
\begin{equation*}
    \begin{cases}
0\leq i\leq m/2,\:0\leq j\leq (m-4)/2&\text{ if }m\text{ is even},\\
0\leq i\leq (m-1)/2,\:0\leq j\leq (m-3)/2&\text{ if }m\text{ is odd}.\\
\end{cases}
\end{equation*}
It should be straightforward to adapt our degeneration-based analysis of inflection points to account for the odd case  as well.
\item[ii.] Whenever the curve $E$ is defined over a field of positive characteristic $p\neq 2,3$, it has a Weierstrass equation and the associated Wronskian $\det M(\mu,k)$ may still be defined by using Hasse derivatives in place of the differential operators $D^i=\frac{d^i}{dx^i}$. St\"ohr and Voloch have used these Wronskians to carry out (refinements of) rational point counts for algebraic curves defined over finite fields \cite{SV}.
\item[iii.] The Wronskian of a rank-$r$ linear series $(L,V)$ on an arbitrary algebraic curve $C$ naturally defines an {\it Euler class}, associated to (the determinant of) the jet bundle $J^{r+1}(L)$ on $C$ whose fiber in a point $p$ is $H^0(L/L(-(r+1)p))$. Moreover, when $C$ is (hyper)elliptic and $L$ arises via pullback from $\mb{P}^1$, the jet bundle in question is {\it (relatively) orientable}, which ensures that the Wronskian defines an {\it arithmetic} Euler class in the sense of $\mb{A}^1$-homotopy theory \cite{KW1,KW2}. In \cite{CDH} we calculate a global arithmetic inflection formula over an arbitrary field of characteristic $p \neq 2,3$, and we analyze the geometric meaning of its constituent local arithmetic inflection indices over $\mb{F}_q$.
\end{itemize}
 
\subsection{Symmetries of inflection polynomials}
The inductive formula \eqref{inflection_recurrence} has a number of interesting consequences.

\begin{lemma}\label{key_poly_symmetry}
For every positive integer $k \geq 1$, we have
\begin{equation}\label{symmetry_property}
P_{1,k}(x,\la)= P_{1,k}(x,z) \text{ and }P_{1,k}(x+1,\la+1)= P_{1,k}(-x,-\la).
\end{equation}
Here by $P_{1,k}(x,z)$ we mean the polynomial obtained by homogenizing with respect to $z$ to obtain a degree-$2(k+1)$ polynomial in $\mb{Q}[x,\la,z]$; and then dehomogenizing with respect to $\la$ to obtain a degree-$2(k+1)$ polynomial in $\mb{Q}[x,z]$.
\end{lemma}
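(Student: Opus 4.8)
The plan is to prove both halves of \eqref{symmetry_property} by induction on $k$, propagating each identity through the recurrence \eqref{inflection_recurrence}; nothing beyond \eqref{inflection_recurrence} (valid for all $k\ge 0$) and the degree information already recorded is needed. One preliminary point: for the first identity even to parse, the total degree of $P_{1,k}$ must be at most $2(k+1)$, so that $P_{1,k}$ admits a homogenization of degree $2(k+1)$; since $\deg_x P_{1,k}=2(k+1)$ this in fact pins the total degree to $2(k+1)$, and the needed bound follows by a one-line induction on \eqref{inflection_recurrence} (applying $d/dx$ lowers total degree by $1$, while multiplication by $f$ or by $D(f)$ raises it by $3$ or by $2$). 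Granting this, unwinding the homogenization-then-dehomogenization recipe described in the statement shows that the first identity in \eqref{symmetry_property} is literally the assertion
\[
T(P_{1,k})=\la^{-2(k+1)}\,P_{1,k},\qquad\text{where}\quad T\colon(x,\la)\longmapsto(x/\la,\,1/\la)
\]
acts as the corresponding $\mb{Q}$-algebra substitution on $\mb{Q}(x,\la)$.

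First I would record the three transformation rules that power the induction for this identity: $T(f)=\la^{-3}f$ (a one-line substitution into $f=x(x-1)(x-\la)$), the chain rule $T(Dg)=\la\,D(Tg)$ valid for every $g$, and hence $T(D(f))=\la\,D(\la^{-3}f)=\la^{-2}D(f)$. Assuming $T(P_{1,k})=\la^{-2(k+1)}P_{1,k}$ and applying $T$ to \eqref{inflection_recurrence}, the right side becomes $T(D(P_{1,k}))\,T(f)+(-k+\tfrac12)\,T(P_{1,k})\,T(D(f))$ because $T$ is a ring homomorphism; substituting the three rules and the inductive hypothesis, the powers of $\la$ assemble into $\la^{-2(k+2)}\bigl(D(P_{1,k})f+(-k+\tfrac12)P_{1,k}D(f)\bigr)=\la^{-2(k+2)}P_{1,k+1}$, which is the claim for $k+1$. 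The base case $k=0$ is immediate from $P_{1,0}=\tfrac12 D(f)$ and $T(D(f))=\la^{-2}D(f)$.

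For the second identity I would run the same argument with the two substitutions $A\colon(x,\la)\mapsto(x+1,\la+1)$ and $B\colon(x,\la)\mapsto(-x,-\la)$, the target being $A(P_{1,k})=B(P_{1,k})$. The relevant rules here are $A\circ D=D\circ A$ and $B\circ D=-D\circ B$ (chain rule), together with the polynomial identities $A(f)=-B(f)=x(x+1)(x-\la)$ and, applying $D$ to the latter, $A(D(f))=B(D(f))$. Feeding these plus the inductive hypothesis into \eqref{inflection_recurrence}, the minus sign in $B\circ D=-D\circ B$ is exactly cancelled by the minus sign in $A(f)=-B(f)$, so $A$ and $B$ transform the right side of \eqref{inflection_recurrence} to the same polynomial; the base case $k=0$ is again a one-line check.

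Both identities are instances of Legendre covariance — the first, of compatibility of $P_{1,k}$ with the symmetry $\la\mapsto 1/\la$ realized on the $x$-line by $x\mapsto x/\la$ (whence the weight $\la^{2(k+1)}$); the second, with $\la\mapsto 1-\la$ realized by $x\mapsto 1-x$ (unimodular up to sign, hence weightless) — and one could alternatively deduce them from covariance of the defining relation $D^{k+1}y=f^{-(k+1)}yP_{1,k}$ under the associated isomorphisms of Legendre curves, the only wrinkle in the second case being a factor of $\sqrt{-1}$ entering through $f_{\la+1}(x+1)=-f_{-\la}(-x)$, which cancels because the Wronskian is linear in $y$. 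I do not anticipate any genuine obstacle: the entire content is accurate tracking of the powers of $\la$ and of signs through \eqref{inflection_recurrence}, together with the purely notational step of recognizing the first identity as the operator equation $T(P_{1,k})=\la^{-2(k+1)}P_{1,k}$.
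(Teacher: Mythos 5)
Your proof is correct, but note that the paper itself offers no argument for this lemma: its entire ``proof'' is a citation to \cite[Lemma 4.2]{CG}, so your write-up is necessarily a different, self-contained route. Both of your key moves check out: (a) recasting the homogenize-then-dehomogenize formulation of the first identity as the operator equation $T(P_{1,k})=\la^{-2(k+1)}P_{1,k}$ with $T\colon(x,\la)\mapsto(x/\la,1/\la)$ is exactly the coefficient symmetry $a_{i,j}=a_{i,\,2(k+1)-i-j}$ that the statement encodes, and the preliminary total-degree bound $\deg P_{1,k}\le 2(k+1)$ (needed for the homogenization to parse) does follow by the one-line induction you indicate; (b) the transformation rules $T(f)=\la^{-3}f$, $T(Dg)=\la\,D(Tg)$, $A(f)=-B(f)$, $B\circ D=-D\circ B$ are all verified by direct substitution, and feeding them into \eqref{inflection_recurrence} propagates both identities from the seed $P_{1,0}=\tfrac12 D(f)$. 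A pleasant feature of your argument is that it only uses that the recurrence has the shape $P_{1,k+1}=D(P_{1,k})\,f+c_k\,P_{1,k}\,D(f)$ for \emph{some} scalar $c_k$, so it is insensitive to the precise coefficient $-k+\tfrac12$. What your approach buys is a proof internal to this note, depending only on data the paper actually supplies; the citation, by contrast, buys brevity and defers to the source where these symmetries are tied to the $\la\mapsto 1/\la$ and $\la\mapsto 1-\la$ symmetries of the Legendre family --- the conceptual origin you correctly identify in your closing remark.
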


\begin{proof}
See \cite[Lemma 4.2]{CG}.
\end{proof}

Lemma~\ref{key_poly_symmetry} implies that the monodromy group associated to the projection from the point $[0:0:1]$ of the projective closure $\overline{\mathcal{C}(1,k)}$ of the {\it inflectionary curve} $\mathcal{C}(1,k)$ defined by $P_{1,k}=0$ inside $\mb{P}^2_{x,\lambda,z}$ contains transpositions that freely permute the points $p_1$, $p_2$, and $p_3$ with coordinates $[0:0:1]$, $[0:1:0]$, and $[1:1:1]$. In particular, the singularities of $\overline{\mathcal{C}(1,k)}$ in these three points are analytically isomorphic.

\begin{conj}\label{singularity_conj}
For every positive integer $k \geq 1$, the plane curve $\overline{\mathcal{C}(1,k)}$ is nonsingular along $\mb{P}^2 \setminus \{p_1,p_2,p_3\}$.
\end{conj}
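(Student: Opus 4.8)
The plan is to reduce the statement to a concrete computation with the basic inflection polynomials $P_{1,k}(x,\la)$ and then exploit the recurrence \eqref{inflection_recurrence} to control where the homogenized polynomial and its partial derivatives can simultaneously vanish. First I would set $F_k(x,\la,z)$ to be the degree-$2(k+1)$ homogenization of $P_{1,k}$ with respect to a new variable $z$, so that $\overline{\mathcal{C}(1,k)}=\{F_k=0\}\subset\mb{P}^2_{x,\la,z}$. A point of this curve is singular exactly when $F_k=\partial_x F_k=\partial_\la F_k=\partial_z F_k=0$ there; by the Euler relation the vanishing of the three partials already forces $F_k=0$, so the singular locus is cut out by the three partials. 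The first step is therefore to understand the line at infinity $z=0$ and the two coordinate axes in the affine chart, locating all points where $F_k$ meets its partials, and checking that the only candidates are $p_1=[0:0:1]$, $p_2=[0:1:0]$, $p_3=[1:1:1]$.

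Second I would handle the line at infinity. Setting $z=0$, $F_k$ restricts to the top-$\lambda$-degree part of $P_{1,k}$ times an appropriate power of $x$; using $\deg_\la P_{1,k}=k+1$ (from the basic-structure discussion, specialized to $\mu=1$) and tracking leading coefficients through \eqref{inflection_recurrence}, one finds that $F_k|_{z=0}$ factors as a monomial in $x,\la$ times (a power of) $(x-\la)$ or similar linear forms — the recurrence makes $f=x(x-1)(x-\la)$ and $D(f)$ the only sources of new roots at infinity. The claim is that the only zero of $F_k|_{z=0}$ that can also be a zero of all partials is $p_2=[0:1:0]$; away from it the restricted polynomial and its $x$-derivative have no common root, so $\overline{\mathcal{C}(1,k)}$ is smooth along $z=0\setminus\{p_2\}$. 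In the affine chart $z=1$ I would argue similarly along $x=0$, $x=\la$, and $x=1$, the loci where $f$ or its translates degenerate: plugging $x=0$ into the recurrence shows $P_{1,k}(0,\la)$ is (up to sign and constants) a power of $\la$, so the only affine singular candidate on $x=0$ is the origin $p_1$; the translation symmetry $P_{1,k}(x+1,\la+1)=P_{1,k}(-x,-\la)$ from Lemma~\ref{key_poly_symmetry} then transports this analysis verbatim to $x=\la$ near $(1,1)$, i.e. to $p_3$, and handles $x=1$ as well.

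Third, and this is the crux, I would show that on the complement of these special lines — i.e. at a point $(x_0,\la_0)$ with $x_0\notin\{0,1,\la_0\}$ — the curve cannot be singular. The mechanism is that \eqref{inflection_recurrence}, rewritten as $P_{1,k+1}=D(P_{1,k})\,f+(-k+\tfrac12)P_{1,k}\,D(f)$, lets one express a common zero of $P_{1,k}$ and $\partial_x P_{1,k}$ as forcing $P_{1,k-1}$ and $\partial_x P_{1,k-1}$ to vanish there too (since $f(x_0,\la_0)\neq0$), and by descending induction one reaches $P_{1,0}=\tfrac12 D(f)$, whose zero set is three reduced points lying on the very lines we excluded — contradiction. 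The $\la$-derivative is incorporated by differentiating the recurrence in $\la$ as well: $\partial_\la P_{1,k+1}$ is a combination of $\partial_\la P_{1,k}$, $P_{1,k}$, $\partial_\la f=-x$, and their $x$-partials, so once $P_{1,k}$ and $\partial_x P_{1,k}$ vanish the condition $\partial_\la P_{1,k+1}=0$ propagates downward in lockstep. Assembling the three steps yields nonsingularity of $\overline{\mathcal{C}(1,k)}$ at every point of $\mb{P}^2\setminus\{p_1,p_2,p_3\}$.

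I expect the main obstacle to be the bookkeeping in the descending induction: one must verify that the factor $(-k+\tfrac12)$ never vanishes (it doesn't, since $k$ is a positive integer), that no spurious cancellation in $D(P_{1,k})\,f+(-k+\tfrac12)P_{1,k}\,D(f)$ allows $P_{1,k+1}$ and $\partial_x P_{1,k+1}$ to vanish without $P_{1,k}$ and $\partial_x P_{1,k}$ doing so, and that the base case genuinely confines all singularities to the three marked points. The symmetry of Lemma~\ref{key_poly_symmetry} is what reduces the case analysis at $p_1$, $p_2$, $p_3$ to essentially a single computation, but it does not by itself rule out singularities elsewhere — that is precisely what the recurrence-driven descent is for, and making that descent airtight (especially the simultaneous control of the $x$- and $\la$-partials, and the behaviour at infinity where homogenization interacts with the degree drop $\deg_\la<\deg_x$) is where the real work lies. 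A residual difficulty is that the argument as sketched proves smoothness but gives no information about the nature of the singularities at $p_1,p_2,p_3$ themselves; since the conjecture only asserts smoothness away from those points, that is acceptable, but one should double-check that the excluded lines $x=0$, $x=1$, $x=\la$, $z=0$ meet $\mb{P}^2\setminus\{p_1,p_2,p_3\}$ only in smooth points of the curve, which follows from the one-variable discriminant computations in steps two.
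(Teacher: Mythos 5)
First, a framing point: the statement you are proving is presented in the paper as a \emph{conjecture} (Conjecture~\ref{singularity_conj}); the authors give no proof, and Remark~iv even suggests they expect modular input to be needed. So your proposal must stand entirely on its own merits, and unfortunately its central step fails.

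The crux of your argument is the descending induction in step three: from $P_{1,k}(x_0,\la_0)=\partial_x P_{1,k}(x_0,\la_0)=0$ and $f(x_0,\la_0)\neq 0$ you want to conclude that $P_{1,k-1}$ and $\partial_x P_{1,k-1}$ also vanish at $(x_0,\la_0)$. Write the recurrence as $P_{1,k}=P_{1,k-1}'\,f+c\,P_{1,k-1}\,f'$ with $c=-(k-1)+\tfrac12$ and $'=\partial_x$. Evaluating it and its $x$-derivative at the point gives only the two linear relations $P_{1,k-1}'f+cP_{1,k-1}f'=0$ and $P_{1,k-1}''f+(1+c)P_{1,k-1}'f'+cP_{1,k-1}f''=0$ in the \emph{three} unknowns $P_{1,k-1}$, $P_{1,k-1}'$, $P_{1,k-1}''$ evaluated at $(x_0,\la_0)$; the second derivative is unconstrained and can absorb the second relation, so nothing forces $P_{1,k-1}$ and $P_{1,k-1}'$ to vanish simultaneously. (Toy model: if $g=h'$, then $g(x_0)=g'(x_0)=0$ tells you nothing about $h(x_0)$.) In fact the recurrence propagates vanishing in the \emph{opposite} direction: up to a power of $f$, $P_{1,k}$ is $D^{k+1}\sqrt{f}$, so at a point with $f\neq 0$ the conditions $P_{1,k}=\partial_xP_{1,k}=0$ are equivalent to $D^{k+1}\sqrt{f}=D^{k+2}\sqrt{f}=0$, which yields $P_{1,k+1}(x_0,\la_0)=0$ but gives no control whatsoever over $P_{1,k-1}$; your descent would require integrating this information, which is impossible. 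The handling of $\partial_\la$ inherits the same defect, so the base case $P_{1,0}=\tfrac12 D(f)$ is never reached. Steps one and two (the analysis along $z=0$, $x=0$, $x=1$, $x=\la$, and the use of the symmetry in Lemma~\ref{key_poly_symmetry} to identify the three distinguished points) are reasonable in outline, but they are moot without a working argument on the open complement of those lines. As written, the proposal does not establish the conjecture, and given that the authors left it open, one should expect that no argument this elementary is available.
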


{\bf \fl Remarks.}
\begin{itemize}
\item[iv.] The second symmetry in \eqref{symmetry_property} suggests that modular properties of $E$ and $\la$ are at play in Conjecture~\ref{singularity_conj}.
\item[v.] The canonical (i.e. N\'eron--Tate) height of a torsion point on $E$ is zero. In \cite{S2}, Silverman obtains bounds on the canonical heights of inflection points for pluricanonical series on hyperelliptic curves. It would be interesting to extend his analysis to our incomplete series on the elliptic curve $E$, as doing so would quantify the extent to which inflection points ``stray" from the torsion lattice on the universal cover of $E$. 
\end{itemize}

\subsection{Genera of inflectionary curves}
The inductive prescription~\eqref{inflection_recurrence} for the inflection polynomials $P_{1,k}$ strongly suggests the following is true.
\begin{conj}\label{singularity_conj_one}
For $k\geq1$, let $\Delta_1(k)=\text{Conv}\{(0,k+1),(k-1,k+1),(k-1,2),(2k-2,2)\}$ and $\Delta_2(k)=\text{Conv}\{(2k,1),(2k+1,1),(2k+1,0),(2k+2,0)\}$. Then
\begin{enumerate}
    \item $P_{1,k}$ has support:\[
    \text{Supp}(P_{1,k})=(\Delta_1(k)\cap\mathbb{Z}^2)\cup(\Delta_2(k)\cap\mathbb{Z}^2)
\]
\item Let $\sigma_k:\text{Supp}(P_{1,k})\xrightarrow{} \text{Supp}(P_{1,k})$ be the reflection along the diagonal $\text{Conv}\{(0,k+1),(2k+2,0)\}$, this is  $\sigma_k(i,j)=(i,2k+2-i-j)$. If $a_{(i,j)}^{(k)}x^i\lambda^j$ is a monomial of $P_{1,k}$, then $a_{(i,j)}^{(k)}=a_{\sigma_k(i,j)}^{(k)}$.
\end{enumerate}
\end{conj}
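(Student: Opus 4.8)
Part (2) is immediate from the first identity in Lemma~\ref{key_poly_symmetry}. Write $P_{1,k}=\sum_{i,j}a^{(k)}_{(i,j)}x^{i}\lambda^{j}$ and use that $P_{1,k}$ has total degree $2(k+1)$ (built into the statement of Lemma~\ref{key_poly_symmetry}, and also a consequence of (1)). Homogenizing to degree $2(k+1)$ in $z$ and dehomogenizing in $\lambda$ turns $P_{1,k}$ into $\sum_{i,j}a^{(k)}_{(i,j)}x^{i}\lambda^{\,2(k+1)-i-j}$; equating this with $P_{1,k}$ and comparing coefficients of $x^{i}\lambda^{j}$ gives $a^{(k)}_{(i,j)}=a^{(k)}_{(i,\,2(k+1)-i-j)}=a^{(k)}_{\sigma_{k}(i,j)}$, which is (2). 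So the substance is (1).

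For (1) I would induct on $k$ through \eqref{inflection_recurrence}, organized by the $\lambda$-grading. Since $f=f_{0}+\lambda f_{1}$ with $f_{0}=x^{2}(x-1)$ and $f_{1}=-x(x-1)$ is linear in $\lambda$, writing $P_{1,k}=\sum_{j\ge 0}p^{(k)}_{j}(x)\lambda^{j}$ converts \eqref{inflection_recurrence} into the triangular recursion
\[
p^{(k+1)}_{j}\;=\;L^{f_{0}}_{k}\bigl(p^{(k)}_{j}\bigr)\;+\;L^{f_{1}}_{k}\bigl(p^{(k)}_{j-1}\bigr),
\qquad
L^{g}_{k}(p):=g\cdot Dp+c_{k}\,(Dg)\,p,
\]
where $c_{k}$ is the coefficient appearing in \eqref{inflection_recurrence}. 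The polytope assertion is then equivalent to the following: $p^{(k)}_{j}$ has $x$-support exactly $\{x^{\,k+1-j},\dots,x^{\,2k-j}\}$ for $2\le j\le k+1$; $p^{(k)}_{1}$ is a nonzero multiple of $x^{2k+1}-(k+\tfrac12)x^{2k}$; and $p^{(k)}_{0}$ is a combination of $x^{2k+2}$ and $x^{2k+1}$ with both coefficients nonzero. In fact the last two are explicit: differentiating $f^{1/2}=x(x-1)^{1/2}$ (the $\lambda=0$ specialization of $y$) by Leibniz' rule gives $p^{(k)}_{0}=(k+1)!\binom{3/2}{k+1}x^{2k+2}-(k+1)!\binom{1/2}{k}x^{2k+1}$, and the second identity of Lemma~\ref{key_poly_symmetry} gives the analogous information at the $\lambda=1$ edge.

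Granting this refined statement at level $k$, the inductive step has three parts. First, the supports fit together: $L^{f_{0}}_{k}$ raises $x$-degree by $2$ and (generically) the order of vanishing at $x=0$ by $1$, while $L^{f_{1}}_{k}$ raises $x$-degree by $1$ and preserves the order, so both $L^{f_{0}}_{k}(p^{(k)}_{j})$ and $L^{f_{1}}_{k}(p^{(k)}_{j-1})$ already land inside the target interval $\{x^{\,k+2-j},\dots,x^{\,2k+2-j}\}$ for $3\le j\le k+1$; the slices $j\in\{0,1,2,k+2\}$ are boundary cases handled directly. Second, the few coefficients that would a priori protrude from the target polytope cancel, and these cancellations are forced by the special shapes in the hypothesis — $L^{f_{1}}_{k}$ drops the degree of $x^{2k+1}-(k+\tfrac12)x^{2k}$ (indeed to a single monomial), so $L^{f_{1}}_{k}(p^{(k)}_{1})$ cannot overfill $p^{(k+1)}_{2}$, while $p^{(k)}_{0}$ has order exactly $2k+1$ at $x=0$, so $L^{f_{0}}_{k}(p^{(k)}_{0})$ begins in degree $2k+3$ and cannot underfill $p^{(k+1)}_{0}$. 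Third, one must show that no surviving coefficient — interior or endpoint — vanishes, so that the support fills the whole polytope.

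This third point is where I expect the real work to be, and I would meet it by strengthening the induction hypothesis to a sign rule: along each $\lambda$-slice the nonzero coefficients of $P_{1,k}$ strictly alternate, and the leading sign of the $j$-th slice is a fixed function of $(k,j)$ — the small cases suggest $(-1)^{k+j}$ for $j\ge 2$ and $(-1)^{k+1},(-1)^{k}$ for $j=0,1$. The mechanism is that the $x^{n}$-coefficient of $L^{g}_{k}(p)$ is a combination of two or three consecutive coefficients of $p$ with weights affine in $n$ and $k$ that one checks are nonzero in the ranges occurring here; on an alternating input these contributions reinforce rather than cancel, so $L^{g}_{k}$ propagates the sign rule, and the same observation shows no cancellation occurs when $L^{f_{0}}_{k}(p^{(k)}_{j})$ and $L^{f_{1}}_{k}(p^{(k)}_{j-1})$ are summed. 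The delicate cases are the slices $j=1,2$, where $\Delta_{1}(k)$ and $\Delta_{2}(k)$ abut, and the top $x$-degrees, where one weight changes sign; there the explicit forms of $p^{(k)}_{0}$ and $p^{(k)}_{1}$ anchor the recursion and the $\sigma_{k}$-symmetry from (2) roughly halves the coefficients that must be followed. In short, I would carry support intervals, the closed forms for $p^{(k)}_{0}$ and $p^{(k)}_{1}$, the sign rule, and the $\sigma_{k}$-symmetry as a single induction hypothesis and prove the whole package at once.
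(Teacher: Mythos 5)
First, a point of framing: the statement you are proving is presented in the paper as Conjecture~\ref{singularity_conj_one} and is \emph{not} proved there, so there is no proof of record to compare against and your proposal must stand on its own. With that understood: your part (2) is complete and correct --- it is exactly the first identity of Lemma~\ref{key_poly_symmetry} unpacked coefficient by coefficient. For part (1), the skeleton is sound and most of the concrete claims check out. The $\lambda$-graded recursion with $f_0=x^2(x-1)$, $f_1=-x(x-1)$ is right; the slicewise restatement of the polytope condition ($x$-support $[k+1-j,2k-j]$ for $2\le j\le k+1$, two-term slices at $j=0,1$) is exactly equivalent to the stated support; the Leibniz computation of $p_0^{(k)}$ at $\lambda=0$ is correct; and the two boundary cancellations you isolate are real: with $c_k=-(k+\tfrac12)$ one finds $L^{f_1}_k\bigl(x^{2k+1}-(k+\tfrac12)x^{2k}\bigr)=(\tfrac14-k^2)\,x^{2k}$, and the $x^{2k+2}$-coefficient of $L^{f_0}_k(p_0^{(k)})$ vanishes because $2k+1+2c_k=0$. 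One caution: all of this, and the conjecture itself (which I checked directly for $k=1,2$), forces $c_k=-(k+\tfrac12)$ rather than the $-k+\tfrac12$ printed in \eqref{inflection_recurrence}; with the printed coefficient $P_{1,1}$ has nine monomials instead of the conjectured five. You should state explicitly that you are using the recursion that actually follows from $D^{k+1}y=f^{-(k+1)}yP_{1,k}$. Also note that your boundary cases at $j=1$ (showing $p_1^{(k+1)}$ is again a multiple of $x^{2k+3}-(k+\tfrac32)x^{2k+2}$) are asserted, not carried out; they are finite computations and they check out in small cases, but they belong in the induction package.

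The genuine gap is the one you flag yourself: that no coefficient inside the target polytope vanishes. Your proposed mechanism --- alternating signs along each $\lambda$-slice, propagated because the convolution weights ``reinforce'' --- does not go through as stated for $L^{f_1}_k$. The $x^n$-coefficient of $L^{f_1}_k(p)$ is $(2k+2-n)\alpha_{n-1}+(n-k-\tfrac12)\alpha_n$; on an alternating input the two contributions have the same sign only when $(2k+2-n)$ and $(n-k-\tfrac12)$ have opposite signs, which fails precisely for $k+1\le n\le 2k+1$. That range meets the target slice $[k+2-j,\,2k+2-j]$ for every $j\le k+1$ (its top end $2k+2-j$ is at least $k+1$), so in exactly the regime you need, the two terms of $L^{f_1}_k$ pull in opposite directions and could cancel an interior coefficient. (By contrast the $L^{f_0}_k$ weights $(n-3k-\tfrac72)$ and $(n-2k-2)$ are both negative for $n\le 2k+1$, so that operator does propagate alternation; the problem is concentrated in the $f_1$ term and in the sum of the two operators.) Closing this requires a genuinely new input --- a magnitude comparison, an interlacing/log-concavity statement for the slice coefficients, or closed forms for the slice polynomials --- and until it is supplied you have a correct proof of part (2), a correct (modulo the routine boundary computations) proof of the containment $\text{Supp}(P_{1,k})\subseteq(\Delta_1(k)\cap\mathbb{Z}^2)\cup(\Delta_2(k)\cap\mathbb{Z}^2)$, and a plausible but unproved strategy for the reverse containment.
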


Conjecture~\ref{singularity_conj_one} predicts that whenever $k\geq2$, the lower faces of the Newton polygon of $P_{1,k}$ that contribute to the singularity of the inflectionary curve $\overline{\mc{C}(1,k)}$ in $(0,0)$ are $\Gamma_1(k):=\text{Conv}\{(0,k+1),(k-1,2)\}$ and $\Gamma_2(k)=\text{Conv}\{(k-1,2),(2k+1,0)\}$. In particular, we have 
\[
P_{1,k}|_{\Gamma_1(k)}=y^2Q_{k-1}(x,y),\quad P_{1,k}|_{\Gamma_2(k)}=x^{k-1}(a_ky^2+b_kx^{k+2})
\]
where $Q_{k-1}(x,y)=\prod_{j=1}^{k-1}(a_j x+b_j y)$.
Geometrically, this means that the singularity in $(0,0)$ is the union of $k-1$ smooth branches corresponding to the linear factors of $Q_{k-1}$, together with an additional singularity of analytic type $y^2+x^{k+2}=0$. Assuming all of these intersect transversely, we would expect the following to hold. 

\begin{conj}\label{singularity_conj_two}
The delta-invariant of the singularity of the inflectionary curve $P_{1,k}=0$ in $(0,0)$ is 
$\delta=\lfloor \frac{k^2}{2} \rfloor+ k$.
\end{conj}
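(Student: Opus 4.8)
The plan is to compute the delta-invariant of the singularity at $(0,0)$ by combining the conjectural local factorization of $P_{1,k}$ along the two lower faces $\Gamma_1(k)$ and $\Gamma_2(k)$ with the standard additivity formula for delta-invariants under splitting into branches. Concretely, assuming Conjecture~\ref{singularity_conj_one} (so that the Newton polygon of $P_{1,k}$ at the origin has exactly the two lower faces described, and the quasi-homogeneous leading forms are $P_{1,k}|_{\Gamma_1(k)}=y^2Q_{k-1}(x,y)$ with $Q_{k-1}=\prod_{j=1}^{k-1}(a_jx+b_jy)$ and $P_{1,k}|_{\Gamma_2(k)}=x^{k-1}(a_ky^2+b_kx^{k+2})$), the germ at $(0,0)$ decomposes into $k-1$ smooth branches $B_1,\dots,B_{k-1}$ (the linear factors, each mutually transverse and transverse to the axes) together with one further germ $B_0$ of analytic type $y^2+x^{k+2}=0$, i.e.\ an $A_{k+1}$-singularity. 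The delta-invariant is then $\delta=\sum_{i} \delta(B_i) + \sum_{i<j}(B_i\cdot B_j)$, where the intersection multiplicities are all computed from the assumed transversality.

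First I would record the local contributions: each smooth branch $B_i$ has $\delta(B_i)=0$, and the $A_{k+1}$-germ $y^2+x^{k+2}=0$ has $\delta(B_0)=\lfloor (k+2)/2\rfloor = \lfloor k/2\rfloor + 1$ (the standard value $\delta(A_m)=\lceil m/2\rceil$ with $m=k+1$, so $\delta = \lceil (k+1)/2 \rceil = \lfloor k/2 \rfloor + 1$). Next I would compute the pairwise intersection numbers at the origin. Two distinct smooth transverse branches meet with multiplicity $1$, giving $\binom{k-1}{2}$ from the pairs among $B_1,\dots,B_{k-1}$. A smooth branch $B_i$ ($i\ge 1$) meeting the $A_{k+1}$-germ $B_0$ transversally contributes $(B_i\cdot B_0)=\operatorname{mult}(B_0)=2$, since $B_0$ has multiplicity $2$ at the origin; summing over the $k-1$ smooth branches gives $2(k-1)$. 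Adding everything:
\[
\delta \;=\; \Bigl(\lfloor k/2\rfloor+1\Bigr) \;+\; \binom{k-1}{2} \;+\; 2(k-1).
\]
A direct simplification of $\binom{k-1}{2}+2(k-1) = \tfrac{(k-1)(k-2)}{2} + 2(k-1) = \tfrac{(k-1)(k+2)}{2} = \tfrac{k^2+k-2}{2}$, and then $\lfloor k/2\rfloor + 1 + \tfrac{k^2+k-2}{2}$. Writing $k^2+k = 2\lfloor k^2/2\rfloor + (k \bmod 2) + k$ and checking the two parities separately shows this equals $\lfloor k^2/2\rfloor + k$, as claimed; I would present the even and odd cases of $k$ in a short display to confirm the floor bookkeeping.

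The main obstacle is that this argument is \emph{conditional} on the transversality hypotheses: one must verify that the $k-1$ linear branches are pairwise distinct (i.e.\ the ratios $a_j/b_j$ are distinct, equivalently $Q_{k-1}$ is squarefree and shares no factor with $x$ or $y$), that none of them is tangent to the $A_{k+1}$-branch, and — most delicately — that the full germ really is the \emph{transverse} union of these pieces rather than having extra tangency forced by higher-order terms of $P_{1,k}$ not visible on the Newton polygon. Equivalently, one needs the Newton polygon at $(0,0)$ to be \emph{nondegenerate} in the sense that the delta-invariant is computed by the toric/Newton formula (e.g.\ via the Milnor number $\mu = \sum_{\text{faces}}(\text{normalized area terms})$ and the relation $2\delta = \mu + r - 1$ with $r = k$ the number of branches). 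Establishing this nondegeneracy is where the inductive structure \eqref{inflection_recurrence} would have to be exploited seriously: one would track, by induction on $k$, both the coefficients $a_k,b_k$ appearing in $P_{1,k}|_{\Gamma_2(k)}$ (showing $a_k,b_k\neq 0$) and the squarefreeness of $Q_{k-1}$, presumably using the symmetry $\sigma_k$ of Conjecture~\ref{singularity_conj_one} to constrain the leading forms. Absent a proof of Conjectures~\ref{singularity_conj} and~\ref{singularity_conj_one}, the cleanest honest statement is that the formula $\delta = \lfloor k^2/2\rfloor + k$ follows from those conjectures together with the transversality expectation, and I would frame the proof accordingly, flagging the nondegeneracy check as the one genuinely hard input.
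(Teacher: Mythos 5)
Your argument is essentially identical to the paper's own justification: the paper motivates this conjecture (it is stated as a conjecture, not a theorem) by exactly the same decomposition into the $k-1$ smooth branches of $Q_{k-1}$ (contributing $\binom{k-1}{2}$), the germ $y^2+x^{k+2}=0$ (contributing $\lfloor k/2\rfloor+1$), and the cross-term $2(k-1)$ from generic transversality. Your write-up adds the explicit floor-function arithmetic and correctly flags that the whole computation is conditional on the Newton-polygon nondegeneracy and on Conjectures~\ref{singularity_conj} and~\ref{singularity_conj_one}, which is precisely the status the paper assigns to it.
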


Indeed, one arrives at Conjecture~\ref{singularity_conj_two} by viewing $\delta$ as a ``local number of nodes". The $(k-1)$-fold point defined by $Q_{k-1}$ generically will have delta-invariant $\binom{k-1}{2}$, while $y^2+x^{k+2}=0$ has delta-invariant $\lfloor \frac{k}{2}\rfloor +1$. Finally, their union will generically have delta-invariant equal to $2(k-1)$ plus their sum.

Conjectures \ref{singularity_conj} and \ref{singularity_conj_two}, in tandem with the degree-arithmetic genus formula for plane curves, predict that the geometric genus of $\overline{\mc{C}(1,k)}$ is $p_g= \binom{2k+1}{2}- 3\lfloor \frac{k^2}{2} \rfloor-3k$.

\section{Reality phenomena}
\subsection{Separability}
In the papers \cite{BCG} and \cite{CG}, our primary objective was constructing real linear series with many real inflection points on (hyper)elliptic curves $X$. The situation when $X=E$ is elliptic and the real locus $E(\mb{R})$ has two connected components is distinguished.

\begin{conj}\label{real_separability_conjecture} When the polynomial $f=x(x-1)(x-\la)$ has three distinct real roots (i.e., when $\la \in \mb{R}$), each corresponding inflection polynomial $P_{\mu,k}$ has only simple roots in $x$ away from $\{0,1\}$ for every fixed value of $\la$.
\end{conj}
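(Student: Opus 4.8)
The plan is to reduce the statement, for all $\mu$, to the basic polynomials $P_{1,k}$, and then to treat those by induction on $k$ using the recursion~\eqref{inflection_recurrence} together with classical Sturm/Rolle theory.

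\emph{Reduction to $\mu=1$.} Since $D^j(x^i)=0$ whenever $j>i$, the Wronskian matrix of $\mc F=\{1,x,\dots,x^k,y,xy,\dots,x^{\mu-1}y\}$ with respect to $D$ is block upper-triangular: its bottom-left $\mu\times(k+1)$ block vanishes, while the top-left $(k+1)\times(k+1)$ block is upper-triangular with nonzero diagonal $0!,1!,\dots,k!$. Hence $\det M(\mu,k)$ is, up to a nonzero constant, the full Wronskian of $\mc F$, and --- the double cover $\pi$ being \'etale over any $x_0\notin\{0,1,\la\}$ --- a root of $P_{\mu,k}$ at such an $x_0$ of multiplicity $m$ is precisely an inflection point $P=(x_0,\pm y_0)$ of the linear series $(L,\mc F)$ of ramification weight $m$. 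So the conjecture asserts that, for real $\la$, every inflection point of $(L,\mc F)$ lying over $\mathbb{P}^1\setminus\{0,1,\infty\}$ is an ordinary flex. (The exclusion of $\{0,1\}$, and the separate treatment $x=\la$ requires, reflects that these lie over the branch locus of $\pi$, where the Wronskian of $(L,\mc F)$ on $E$ itself carries unavoidable extra vanishing.) Now Lemma~\ref{general_inflection_polys} realises $P_{\mu,k}$ as the generalized Wronskian $Q_{\mu,n}|_{t_\ell=P_{1,n+\ell-1}}$ of the $2\mu-1$ consecutive basic polynomials $P_{1,n-\mu},\dots,P_{1,n+\mu-2}$; combining the Sylvester/Desnanot--Jacobi identities for these nested Wronskian minors with the recursion~\eqref{inflection_recurrence} linking successive $P_{1,m}$, one aims to show that a multiple root of $P_{\mu,k}$ away from $\{0,1,\la\}$ forces two consecutive $P_{1,m}$ to vanish there simultaneously --- which the $\mu=1$ analysis will rule out.

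\emph{The case $\mu=1$.} Writing $u=y=\sqrt{f}$, the defining relation $\det M(1,k)=f^{-(k+1)}y\,P_{1,k}$ becomes $P_{1,k}=f^{k+1}u^{(k+1)}/u$, and $u$ satisfies the second-order Fuchsian equation $u''=\phi\,u$ with $\phi=(2ff''-(f')^2)/(4f^2)=P_{1,1}/f^2$, whose only algebraic solution is $\sqrt{f}$ (equivalently $\sqrt{-f}$, the two being proportional), with exponents $\tfrac12,\tfrac12$ at each of $0,1,\la$ and $-\tfrac32,\tfrac12$ at $\infty$. The real roots $r_1<r_2<r_3$ of $f$ cut $\mathbb{R}$ into four open intervals, and on each the real zeros of $P_{1,k}$ are precisely those of $D^{k+1}\sqrt{|f|}$. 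I would prove by induction on $k$ the combined assertion: on each such interval $D^{k+1}\sqrt{|f|}$ has only simple zeros, and the zero sets of $P_{1,k}$ and $P_{1,k+1}$ strictly interlace. The inductive step uses that at a simple zero $x_0\notin\{r_1,r_2,r_3\}$ of $P_{1,k}$ the recursion~\eqref{inflection_recurrence} gives $P_{1,k+1}(x_0)=f(x_0)\,P_{1,k}'(x_0)\neq0$ with sign dictated by the interlacing hypothesis; feeding this Rolle-type data, together with the explicit branch-point asymptotics of $\sqrt{|f|}$ near each $r_i$ and at $\pm\infty$, into a Sturm-sequence count should pin all $2(k+1)$ roots of $P_{1,k}$ down as real, simple and interior to the four intervals.

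\emph{The main obstacle, and a fallback.} The delicate point is the passage across the sign changes of $f$ at $r_1,r_2,r_3$: the real solution of $u''=\phi u$ switches between $\sqrt{f}$ and $\sqrt{-f}$ there, and $P_{1,k}$ and its lower relatives can a priori acquire zeros clustering near these points --- which is presumably exactly why $\{0,1\}$ must be excluded and $x=\la$ handled apart. Should the Sylvester-identity route for $\mu\geq2$ prove unwieldy, a fallback is a degeneration argument: $I(\mu,k)$ varies in a flat family over the $\la$-line, and one seeks to bound the discriminant $\mathrm{disc}_x\bigl(P_{\mu,k}/(x^a(x-1)^b)\bigr)\in\mathbb{Q}[\la]$ away from the real locus by analysing the limits at $\la=0,1,\infty$, where $E$ acquires a node and the inflection divisor splits predictably between the normalization and the node --- possibly using the $(x,\la)$-symmetries of Lemma~\ref{key_poly_symmetry} to shrink the range of $\la$ that needs to be examined. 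Either way, the genuine difficulty is upgrading these limiting pictures to \emph{uniform} control over all real $\la$: the conjecture admits no exceptional real values, which is considerably stronger than the generic simplicity that follows formally from separability of $P_{\mu,k}$ over $\mathbb{Q}(\la)$.
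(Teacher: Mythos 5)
First, a point of calibration: the paper does not prove this statement --- it is posed as an open conjecture (Conjecture~\ref{real_separability_conjecture}) --- so there is no proof of record to measure your attempt against. What you have written is, by your own framing, a plan rather than a proof: the reduction from general $\mu$ to $\mu=1$ via Sylvester/Desnanot--Jacobi identities is only something you ``aim to show,'' and the $\mu=1$ case is an induction whose inductive step is not carried out. That alone would keep this from counting as a proof of the conjecture.

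More importantly, the endgame of your $\mu=1$ argument is false. Your Sturm/interlacing induction is supposed to conclude that all $2(k+1)$ roots of $P_{1,k}$ are real (``pin all $2(k+1)$ roots of $P_{1,k}$ down as real, simple and interior to the four intervals''), and this already fails at $k=1$. Working directly from the definition, $\det M(1,1)=D^2y=u''$ with $u=\sqrt f$, so $P_{1,1}=f^2\,u''/u=\tfrac12 ff''-\tfrac14(f')^2$ (this is your own formula for $\phi$). Taking $\la=2$, i.e.\ $f=x(x-1)(x-2)$, gives $4P_{1,1}=3x^4-12x^3+12x^2-4$, whose derivative $12x(x-1)(x-2)$ vanishes at $x=0,1,2$ with critical values $-4,-1,-4$, all negative; hence $P_{1,1}$ has exactly two real roots (one in $(-1,0)$, one in $(2,3)$) and a conjugate pair of nonreal roots. (This matches the classical fact that a real plane cubic has only $3$ of its $9$ flexes real, and is consistent with Conjecture~\ref{numerology_of_key_polynomial_roots}, which predicts far fewer than $2\mu(k+1)$ real roots; note also that the printed recurrence~\eqref{inflection_recurrence} appears to carry a sign slip --- differentiating $D^{k+1}y=f^{-(k+1)}yP_{1,k}$ yields coefficient $-(k+\tfrac12)$ rather than $-k+\tfrac12$ --- which is why the counterexample should be checked against the definition, not the recurrence as printed.) Since the $P_{1,k}$ are not real-rooted, consecutive ones cannot strictly interlace on $\mb{R}$ and the Rolle/Sturm bookkeeping collapses; the conjecture is a statement about squarefreeness in $\mb{C}[x]$, which real interlacing cannot see. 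The one solid ingredient you do extract --- that at a simple zero $x_0$ of $P_{1,k}$ with $f(x_0)\neq0$ the recursion gives $P_{1,k+1}(x_0)=f(x_0)P_{1,k}'(x_0)\neq0$, so consecutive basic inflection polynomials share no roots off the branch locus --- is worth keeping, but it does not by itself exclude a multiple root of $P_{1,k+1}$ at a point where $P_{1,k}\neq0$, and the $\mu\geq2$ reduction remains entirely unexecuted.
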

In particular, Conjecture~\ref{real_separability_conjecture} predicts that each real zero $x=\ga$ of $P_{\mu,k}$ lifts to either 2 or 0 real points of $I(\mu,k)$, depending upon whether the number $P_{\mu,k}(\ga,\la)$ is positive or not. 

\subsection{Real loci of inflectionary curves in the maximally-real case}

\begin{conj}\label{numerology_of_key_polynomial_roots}
Let $\mu \geq 1$ and $k \geq \mu+1$ be nonnegative integers. 
Assume that $f$ is associated with a real Legendre parameter $\la$. For every fixed value of $\la\neq0,1$, the corresponding inflection polynomial $P_{\mu,k}$ has precisely either $\mu$ or $2\mu$ real roots $x=\gamma$ such that $f(\gamma,\la) > 0$, depending upon whether $k-\mu$ is even or odd.
\end{conj}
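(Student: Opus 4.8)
The plan is to translate the statement into a count of real inflection points on the two components of $E_\la(\mb R)$, to settle the case $\mu=1$ with the explicit Wronskian and an asymptotic sign analysis, and then to bootstrap to $\mu\ge2$ via Lemma~\ref{general_inflection_polys}. Fix a real $\la\neq0,1$, write $r_1<r_2<r_3$ for the real roots of $f$, so that $\{f>0\}=(r_1,r_2)\cup(r_3,\infty)$, and note that $E_\la(\mb R)$ is the disjoint union of two circles: the oval $E^{\mathrm{ov}}$ over $[r_1,r_2]$ and the component $E^{\infty}\ni O$ over $[r_3,\infty]$. A real root $\ga$ of $P_{\mu,k}(\cdot,\la)$ with $f(\ga)>0$ lifts to exactly two real points of $I(\mu,k)$, one on each sheet $y=\pm\sqrt{f(\ga)}$, so the quantity in the statement equals one half the number of real inflection points of $\mc{F}$ on $E_\la(\mb R)$. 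I would in fact aim for the sharper component-wise claim: $E^{\infty}$ always carries exactly $2\mu$ such points, while $E^{\mathrm{ov}}$ carries $2\mu$ or $0$ according as $g:=k-\mu$ is odd or even. This refinement is consistent with the complete case $\mu=k-1$, where the inflection points are the $2k$-torsion points of $E_\la$ and split evenly between the two components.

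For $\mu=1$ the matrix $M(1,k)$ is $1\times1$, so $\det M(1,k)=D^{k+1}y$, which on $\{f>0\}$ equals $P_{1,k}(x)f(x)^{-(k+1/2)}$ by the identity expressing $D^{k+1}y$ as a power of $f$ times $yP_{1,k}$ that appears in the proof of Lemma~\ref{general_inflection_polys}. Thus the real inflection abscissae on each branch are the zeros of $h^{(k+1)}$ with $h:=\sqrt f$. A short computation with falling factorials fixes the sign of $h^{(k+1)}$ at every endpoint: from $h\sim\mathrm{const}\cdot|x-r_i|^{1/2}$ near $r_i$ and $h=x^{3/2}-\tfrac{1+\la}{2}x^{1/2}+\cdots$ near $\infty$ one finds that $h^{(k+1)}$ blows up with sign $(-1)^k$ at $r_1^{+}$ and $r_3^{+}$, blows up with sign $-1$ at $r_2^{-}$, and decays to $0$ with sign $(-1)^{k-1}$ at $+\infty$. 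Hence $h^{(k+1)}$ has an odd number of zeros on $(r_3,\infty)$ for every $k$, and an odd (resp. even) number on $(r_1,r_2)$ when $k$ is even (resp. odd). Assuming Conjecture~\ref{real_separability_conjecture}, so that all these zeros are simple, this already yields the parity predicted by Conjecture~\ref{numerology_of_key_polynomial_roots} together with the matching lower bounds on each component. What is then left for $\mu=1$ is the corresponding upper bound, i.e. the absence of \emph{extra} zeros of $P_{1,k}$ in $\{f>0\}$, which I would obtain by induction on $k$ from the recurrence \eqref{inflection_recurrence}: near each $r_i$ the term $fD(P_{1,k})$ drops out and the new sign pattern is governed by $(\tfrac12-k)D(f)P_{1,k}$, while in the interior of each branch a Rolle-type estimate bounds the zero count of $P_{1,k+1}$ in terms of those of $P_{1,k}$ and $D(P_{1,k})$. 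The Newton-polygon description and coefficient symmetry in Conjecture~\ref{singularity_conj_one} should supply the combinatorics that keeps this induction from leaking zeros into $\{f>0\}$.

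For $\mu\ge2$ I would invoke Lemma~\ref{general_inflection_polys} and formula \eqref{equivalence}, which exhibit $P_{\mu,k}$, up to an invertible factor, as a $\mu\times\mu$ determinant in the $2\mu-1$ consecutive derivatives $D^{m}y$ with $k+2-\mu\le m\le k+\mu$. Near each branch point these functions have pure power asymptotics $|x-r_i|^{1/2-m}$, and near $\infty$ asymptotics $x^{3/2-m}$, with distinct exponents, so the relevant system is Chebyshev there; this again fixes the endpoint sign behavior of the determinant, and the standard interlacing of zeros of consecutive Wronskian-type determinants should propagate the $\mu=1$ count to general $\mu$, with the parity of $g$ preserved because the asymptotic exponents shift with $m$. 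An alternative is to work directly with the homogeneous determinant $Q_{\mu,n}$, realizing the passage from $M(\mu,k)$ to $M(\mu+1,k)$ through a Desnanot--Jacobi identity among its minors.

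The true obstacle, I expect, is the upper bound in the case $\mu=1$: showing that $P_{1,k}$ acquires \emph{no more} than its asymptotically forced zeros in $\{f>0\}$, equivalently that the curve $E^{\mathrm{ov}}\sqcup E^{\infty}\to\mb{P}^{k+1}(\mb R)$ attached to $\mc{F}$ has the minimal number of flattening points permitted on each circle by its homotopy class. Parity and lower bounds are essentially topological (Sturm- and M\"obius-type obstructions), but the upper bound cannot come from general position alone, since $\{1,x,\dots,x^k,\sqrt f\}$ is \emph{not} a global Chebyshev system on either half-line $(r_3,\infty)$ or $(r_1,r_2)$; closing the induction seems to require genuinely using that $L$ is pulled back from $\mb{P}^{1}$, so that $x$ is a single monotone coordinate along each real branch, together with the precise form of \eqref{inflection_recurrence} and the symmetry predicted in Conjecture~\ref{singularity_conj_one} (whose resolution would thus become a natural prerequisite). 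A secondary difficulty, in the $\mu\ge2$ step, is to verify that the interlacing survives near the branch points, where all the $D^{m}y$ blow up simultaneously---at the distinct rates that ought nonetheless to make the bookkeeping manageable. Finally, if one tries instead to deduce the general-$\la$ statement from a single degeneration $\la\to1^{+}$ or $\la\to+\infty$ in the spirit of \cite{BCG,CG}, one must rule out real roots of $P_{\mu,k}$ leaving $\{f>0\}$ through $x\in\{0,1,\la\}$ or through $x=\pm\infty$ as $\la$ moves within one of the three intervals cut out by $\{0,1\}$; the symmetries of Lemma~\ref{key_poly_symmetry} reduce this last point to a single interval.
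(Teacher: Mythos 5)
This statement is one of the paper's open conjectures: no proof is given, and the only evidence offered is computational and graphical (Figure~1 and the data in \cite{CG}), so there is no argument of ours to measure yours against. Judged on its own terms, the verifiable half of your plan checks out. The reduction of the count to real inflection points on the two components of $E_\la(\mb{R})$ is correct (each root $\ga$ with $f(\ga)>0$ lifts to two real points, roots with $f\le 0$ contribute nothing), and for $\mu=1$ the identification of the relevant roots with the zeros of $h^{(k+1)}$, $h=\sqrt{f}$, on $\{f>0\}$ follows from $D^{k+1}y=f^{-(k+1)}yP_{1,k}$. Your falling-factorial signs at $r_1^+$, $r_2^-$, $r_3^+$ and $+\infty$ are right, and they do force the predicted parities on each interval together with the lower bounds ($\geq 1$ root on $(r_3,\infty)$ always, $\geq 1$ on $(r_1,r_2)$ exactly when $k$ is even); this is consistent with, and refines, the conjectured dichotomy, and your consistency check against the complete case $\mu=k-1$ via $E[2k]$ is also correct.

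The proposal is nonetheless not a proof, and the gap is the one you yourself flag: the upper bound. For $\mu=1$ the recurrence \eqref{inflection_recurrence} is equivalent to $P_{1,k+1}=f^{k+1/2}\,D\bigl(f^{1/2-k}P_{1,k}\bigr)$, i.e.\ to $h^{(k+2)}=D(h^{(k+1)})$, and Rolle applied to this only ever produces \emph{lower} bounds on the number of critical points between consecutive zeros and endpoints; nothing in the recurrence by itself prevents the zero count in $\{f>0\}$ from growing by one at each step, which is precisely what must be excluded. Your appeal to Conjecture~\ref{singularity_conj_one} to ``keep the induction from leaking zeros'' conditions the argument on another open conjecture of the paper without specifying the mechanism, and the simplicity of the roots is itself Conjecture~\ref{real_separability_conjecture}. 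The $\mu\ge2$ step is in worse shape: the interlacing of zeros of consecutive Wronskian minors that you invoke is a theorem for Markov/Chebyshev systems, but you correctly observe two sentences later that $\{1,x,\dots,x^k,\sqrt{f}\}$ is \emph{not} a Chebyshev system on either real branch, so that tool is not available as stated; establishing a substitute (e.g.\ via the Desnanot--Jacobi route you mention, or via total positivity of the generalized Vandermonde leading-coefficient matrices near the branch points) is an unsolved subproblem, not a routine verification. In short: the parity and lower-bound half of the conjecture is within reach by your method and is worth writing up, but the ``no extra real zeros in $\{f>0\}$'' half remains genuinely open, exactly as it does in the paper.
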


The most striking evidence in favor of Conjecture~\ref{numerology_of_key_polynomial_roots} is graphical in nature; see \cite{CG} and Figure 1 above.
The topology of the real locus of the inflectionary curve $\overline{\mathcal{C}(\mu,k)}$ defined by $P_{\mu,k}=0$ seemingly is controlled by the singularities in the distinguished points $[0:0:1]$, $[0:1:0]$, and $[1:1:1]$.

\begin{figure}\label{graphical_evidence}
    \centering
    \includegraphics{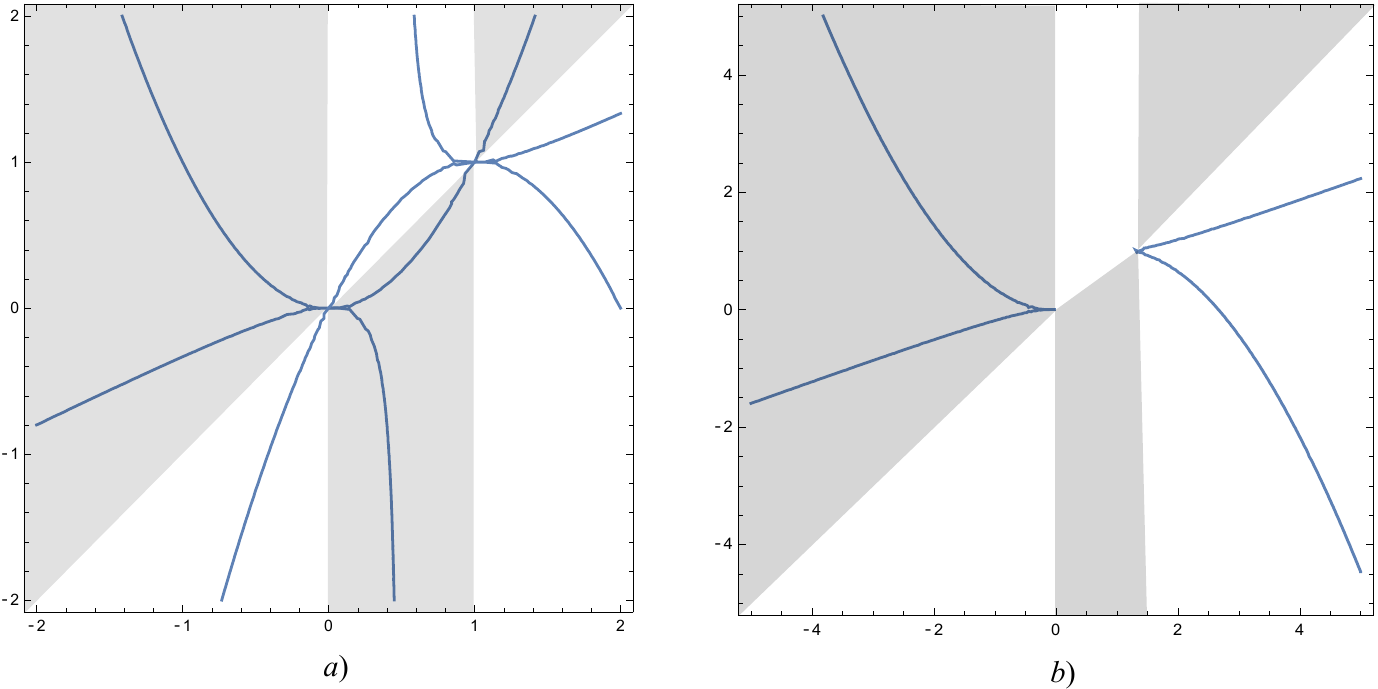}
    \caption{Real loci of the inflectionary curves $\mc{C}(1,2)$ and $\mc{C}(1,3)$, respectively. The regions where $f>0$ are shaded.}
    \label{fig:my_label}
\end{figure}

\end{document}